\begin{document}

\newtheorem{theorem}{Theorem}
\newtheorem{conjecture}[theorem]{Conjecture}
\newtheorem{proposition}[theorem]{Proposition}
\newtheorem{question}[theorem]{Question}
\newtheorem{lemma}[theorem]{Lemma}
\newtheorem{cor}[theorem]{Corollary}
\newtheorem{obs}[theorem]{Observation}
\newtheorem{proc}[theorem]{Procedure}
\newcommand{\comments}[1]{} 
\def\Z{\mathbb Z}
\def\Za{\mathbb Z^\ast}
\def\Fq{{\mathbb F}_q}
\def\R{\mathbb R}
\def\N{\mathbb N}
\def\k{\kappa}

\title{A Condition Forcing Spatial Curves to Develop Type-II Singularities Under Curve Shortening Flow}

\author{Gabriel J. H. Khan}
\email{khan.375@osu.edu}

\date{\today}

\maketitle 



\begin{abstract}
We show that if a curve that any curve immersed in $\R^3$ has everywhere positive torsion after the final inflection point, it develops a type-II singularity under curve shortening flow.
\end{abstract}

\section{Introduction}

Let $\gamma$ be a smooth immersion from $S^1$ to $R^n$. We then define the following differential equation.
\begin{equation}\label{eq:CSF}
\partial_t \gamma = \kappa N
\end{equation}
where $\kappa$ is the curvature and $N$ is the unit normal vector. We will study solutions to this equation, which consist of a family of curves $\gamma_t$ with $t \in [0,~\omega)$ which satisfy ~\eqref{eq:CSF} with $\gamma_0 = \gamma$.

This is commonly referred to as curve shortening flow and is the simplest example of mean curvature flow.
Michael Gage and Richard Hamilton proved short time existence and analyticity of the flow \cite{GH} and Matthew Grayson proved that the flow continues so long as curvature remains bounded \cite{G}. However, since the flow is the $L^2$ gradient flow for length of the curve (hence the name), a singularity must emerge as some time $\omega$.
A blow-up singularity is Type I if $\lim_{t \to \omega} M_{t} \cdot (\omega - t)$ is bounded and Type II otherwise where \begin{equation} M_{t} = \sup_{p \in \gamma_t} \kappa^2(p). \end{equation} We use the results of \cite{A} throughout and assume familiarity with this work. 

This is intended to show how strongly Grayson's theorem fails in higher dimensions. In two dimension,  Grayson proved that any initially embedded curve becomes convex. Combined with the earlier results of Gage and Hamilton, this shows that any initially embedded curve remains embedded, becomes convex and shrinks to a round point. In three dimensions, this is very different. Initially embedded curves can intersect and this theorem comes very close to showing that some embedded curves develop cusp-like singularities

\section{Positive Torsion Forces Type II Singularities}

\begin{theorem}
\label{Type II singularities}
Given a curve $\gamma$ in $\R^3$ without flat points past the last inflection point, under the curve shortening flow $\gamma_t$ develops a type II singularity.
\end{theorem}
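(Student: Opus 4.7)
The plan is to argue by contradiction: assume that $\gamma_t$ develops a Type I singularity at time $\omega$, and deduce that $\gamma$ must lie in a plane, contradicting the hypothesis. First I would pick a singular point $p_0$ and form the Type I parabolic rescaling
\[
\tilde{\gamma}_s = \frac{1}{\sqrt{2(\omega - t)}}\bigl(\gamma_t - p_0\bigr), \qquad s = -\tfrac{1}{2}\log(\omega - t).
\]
The Type I hypothesis bounds the rescaled curvature $\tilde{\k}$ uniformly, so by the compactness and regularity theory for space curves developed in \cite{A}, a subsequence $\tilde{\gamma}_{s_n}$ converges smoothly on compact subsets to a self-shrinking solution $\gamma_\infty$ of CSF; the principal structural theorem of \cite{A} then identifies $\gamma_\infty$ as planar. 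Since $\k$ and $\tau$ share the dimension $1/\text{length}$, the rescaled torsion is $\tilde{\tau}(\cdot,s) = \sqrt{2(\omega-t)}\,\tau(\cdot,t)$, and smooth subconvergence to the planar $\gamma_\infty$ forces
\[
\sup_p |\tau(p,t)|\,\sqrt{2(\omega-t)} \longrightarrow 0 \quad \text{as } t \to \omega.
\]

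Next I would show that this decay is incompatible with non-planarity of $\gamma$. The plan is to exploit the Frenet evolution equations derived in \cite{A}: the scale-invariant ratio $\tau/\k$ (the geodesic curvature of the tangent spherical image on $S^2$) satisfies a parabolic equation of the form $\partial_t(\tau/\k) = \partial_s^2(\tau/\k) + \mathcal{R}$. I would apply the maximum principle to a suitable convex function of $\tau/\k$, or track a positive-definite integral---for instance a curvature-weighted $L^2$ norm of the binormal indicatrix---to extract a strictly positive lower bound on a scale-invariant quantity preserved along the flow. Such a bound would contradict the decay above and close the argument.

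The main obstacle is this last step: the reaction term $\mathcal{R}$ in the $\tau/\k$ equation is not sign-definite, so the maximum principle does not automatically deliver a scale-invariant lower bound. Isolating the correct functional of $\tau$ and $\k$ is the heart of the proof; in the embedded case one might instead invoke a topological invariant such as the self-linking number, which is preserved so long as the flow remains embedded, though this route would be unavailable for general immersions.
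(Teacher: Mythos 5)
Your first half runs parallel to the paper's argument: assume a Type I singularity, rescale, and invoke Altschuler's compactness and classification results to conclude subconvergence to a planar Abresch--Langer shrinker, which forces the scale-invariant torsion-to-curvature ratio to decay (the paper cites this directly as the ``rough planarity theorem'' of \cite{A}, together with the facts that after some time $c$ there are no inflection points and that $D(t)=\sup\kappa_t\cdot L_t$ converges to a finite limit $D$). Up to that point you are on the paper's track, and your statement that $\sqrt{2(\omega-t)}\,\sup|\tau|\to 0$ is an acceptable substitute for the paper's $\sup_p(\tau/\kappa)\to 0$.

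The genuine gap is precisely the step you flag as the ``heart of the proof'': you never exhibit the scale-invariant quantity that stays bounded below, and without it the decay you derived is perfectly consistent with non-planarity, since torsion may simply decay faster than curvature blows up. The paper's key idea is that the right functional is the $L^1$ norm of torsion, $\|\tau\|_1=\int_{\gamma_t}|\tau|\,ds$, which is scale-invariant and, when $\tau$ keeps a fixed sign after the last inflection point, is strictly increasing: using $\partial_t\tau=\partial_s^2\tau+\partial_s\bigl(\tfrac{2\tau}{\kappa}\partial_s\kappa\bigr)+2\kappa^2\tau$ and $\partial_t v=-\kappa^2 v$, every term except $\int\kappa^2\tau\,ds$ is an exact $s$-derivative and integrates to zero over the closed curve, so $\partial_t\|\tau\|_1=\int\kappa^2|\tau|\,ds>0$. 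Then $\|\tau\|_1\to C>0$, and the elementary chain $\sup\tau\cdot L\geq\|\tau\|_1$ combined with $\sup\kappa\cdot L\to D<\infty$ gives $\sup_p(\tau/\kappa)\geq C/D>0$ in the limit, contradicting rough planarity. Note also that your concern about sign-indefiniteness is well founded: the paper closes the argument only under the standing assumption that torsion is everywhere positive after the last inflection point (this is the hypothesis in the title and abstract, and the proof explicitly says ``suppose torsion is positive for all time after $c$''), and its final section even shows that at a type I singularity torsion must change sign, so no maximum-principle or monotonicity argument applied blindly to $\tau/\kappa$ or to $\int|\tau|\,ds$ can work without that sign hypothesis. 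The self-linking number route you mention is, as you say, unavailable for immersions, and even for embedded curves it controls writhe plus total torsion rather than the lower bound needed here; so as written your proposal stops short of the contradiction.
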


\begin{proof}
Suppose $\gamma_0$ does not lie in any plane. Then, for all $t$ where the flow is defined, the torsion $\tau$ is non-zero as the flow is real-analytic so without loss of generality, the torsion is not everywhere zero.

Now suppose $\gamma$ develops a type I singularity at time $\omega$. Then, under renormalization as described in \cite{A}, $\gamma_t$ approaches an Abresch-Langer solution \cite{AL} with finite winding number in the $C^\infty$ sense \cite{A}. For any such Abresch-Langer curve $S$, $\sup_{p \in S} \k(p) \cdot L < \infty$ where $L$ is the length of S, i.e. $$L= \int_S ds.$$ 
Since $\gamma$ converges to some S, the functional $D(t) = \sup \k_t \cdot L_t$ for $t \in [0, \omega)$ and converges to a finite limit at $\omega$. Let $D= \lim_{t \to \omega} D(t)$.

Furthermore, given that $\gamma$ develops a type-I singularity, all blow up sequences are essential and since the curve converges to an Abresch Langer solution, any sequence ${p_m, t_m}$ such that $\lim_{m \to \infty} t_m = \omega$  , by the rough planarity theorem of \cite{A}, $$\lim_{t \to \omega} \sup_{p \in \gamma_t} \frac{\tau}{\k}(p) = 0$$ where $\tau$ is the torsion of $\gamma_t(p)$. 

A consequence of the fact that all sequences are essential is that there exists a time $c \in [0, \omega)$ such that for all $t \in [c, \omega)$, $\gamma_t$ has no inflection points so torsion is defined everywhere on the curve. Suppose torsion is positive for all time after $c$.

Now we consider the $L^1$ norm of torsion on the curve $\gamma_t$ for $t \in [c, \omega)$.

$$||\tau||_1 = \int_{\gamma_t} |\tau | \, ds$$

We parametrize $\gamma_t$ smoothly by $u \in [0, 2\pi)$ and utilize the calculations from \cite{A} and \cite{AG} for the following few calculations.

$$\int_{\gamma_t} |\tau | \, ds = \int_0^{2\pi}|\tau| \cdot v \, du \textrm{ where } v^2  = \langle \partial_u \gamma_t,\partial_u \gamma_t\rangle.$$

Taking the derivative with respect to time, we obtain
$$\partial_t\int_{\gamma_t} |\tau | \, ds =  \partial_t \int_0^{2\pi}(|\tau|\cdot v)\, du = \int_0^{2\pi} \partial_t (|\tau|\cdot v)\, du $$
We then calculate this explicitly.

\begin{eqnarray*}
\int_\gamma \partial_t |\tau|\cdot v\, du & = & \int_\gamma \partial_t (\tau\cdot v) \, du \\
& = & \int_0^{2\pi} (\partial_t \tau) \cdot v + (\partial_t v) \cdot \tau \, du\\
& = &  \int_0^{2\pi} \left( 2\k^2\tau + \partial_s \left(\frac{2\tau}{\kappa} 
\partial_s \kappa \right) + \partial_s^2 \tau) \right) \cdot v \\ 
& & + -\k^2 \, v \, \cdot\tau \, du\\
& = &  \int_0^{2\pi} \k^2\, \tau \, v \, du + \int_0^{2\pi}  \left(\partial_s \left(\frac{2\tau}{\kappa} 
\partial_s \kappa \right) + \partial_s^2 \tau\right) \cdot v \, du \\
 & = & \int_\gamma \k^2\, \tau \, ds + \int_\gamma \partial_s \left(\frac{2\tau}{\kappa} 
\partial_s \kappa \right) + \partial_s^2 \tau \, ds\\
& = & \int_\gamma \k^2\, \tau \, ds + \left(\frac{2\tau}{\kappa} 
\partial_s \kappa  + \partial_s \tau\right)|_{\partial \gamma}\\ \end{eqnarray*}

Since the boundary of $\gamma$ is empty,
$$ \partial_t ||\tau||_1 = \int_{\gamma_t} \k^2\cdot |\tau| \, ds > 0.$$

Therefore the $L^1$ norm is positive and increasing and so approaches a positive (possibly infinite) limit as t goes to $\omega$. However, $$\sup_{p \in \gamma_t} \tau (p)\cdot L_t \geq ||\tau||_1(t) > 0 $$ so 
$\liminf_{t \to \omega}\sup_{p \in \gamma_t} \tau (p)\cdot L_t \geq \lim_{t \to \omega} ||\tau||_1(t) = C > 0.$

But then $$\lim_{t \to \omega}\sup_{p \in \gamma_t} \frac{\tau}{\k} \geq \frac{C}{D} > 0. $$
$$\Rightarrow \Leftarrow$$

Therefore  $\gamma$ cannot develop a type I singularity and so develops a type II singularity.
\end{proof}

\section{On the Emergence of Flat Points}
A maximum principle argument shows that if a curve has no flat points, then it does not develop any under the curve shortening flow except possibly at inflection points.
Suppose $\tau(p,t)=0$ and this is the first time for which $\tau$ is ever non-positive. Then we compute the time derivative of $\tau$ at this point.
\begin{eqnarray*}
\partial_t \tau(p) & = & \partial_s^2 \tau + 2 \frac{1}{\k}(\partial_s \kappa) (\partial_s \tau) + \frac{2\tau}{\k} \left(\partial_s^2\k - \frac{1}{\k} (\partial_s \k)^2 + \k^3 \right) \\
& = & \partial_s^2 \tau + 2 \frac{1}{\k}(\partial_s \kappa) (\partial_s \tau) \textrm{ since $\tau = 0.$} \\
& = & \partial_s^2 \tau \geq 0 \textrm{ since $\partial_s \tau = 0$ and $p$ minimizes $\tau.$} \\ 
\end{eqnarray*}

This argument does not work at inflection points, where torsion is not defined. One could imagine that after an inflection point, a flat point could emerge where the inflection point was. In the future we will try to study this further and hopefully rule it out from occurring, which would give an open condition in the $C^3$ topology that ensures the curve develops a type 2 singularity.
%
%




\section{Going Forward}
Various people have brought our attention to the following theorem. A proof is given by He Siming in \cite{H}.
\begin{theorem}
Given a curve $\gamma$ embedded on a standard sphere $S^2$ in $\R^3$, $\gamma_t$ remains spherical under ~\eqref{eq:CSF} and approaches a round point in the limit.
\end{theorem}

Combining these results is an extremely inefficient way to show that any curve embedded on a sphere contains a flat point (a much simpler observation is that the total torsion is zero). In fact, Sedykh's theorem states that in this case, there are at least four such points.

\end{document}